\date{\today}
\newtheorem{theorem}{Theorem}[section]
\newtheorem{lemma}[theorem]{Lemma}
\newtheorem{proposition}[theorem]{Proposition}
\theoremstyle{definition}
\newtheorem{definition}[theorem]{Definition}
\theoremstyle{remark}
\newtheorem{remark}[theorem]{Remark}
\numberwithin{equation}{section}
\newcommand{\calO}{\ensuremath{{\mathcal{O}}}}
\newcommand{\calK}{\ensuremath{{\mathcal{K}}}}
\newcommand{\scrO}{\ensuremath{{\mathscr{O}}}}
\newcommand{\scrK}{\ensuremath{{\mathscr{K}}}}
\newcommand{\fraka}{\ensuremath{{\mathfrak{a}}}}
\newcommand{\frakm}{\ensuremath{{\mathfrak{m}}}}
\newcommand{\frakq}{\ensuremath{{\mathfrak{q}}}}
\newcommand{\GLn}{\ensuremath{\mathrm{GL}_n}}
\newcommand{\co}{\ensuremath{\mathcal{O}}}
\newcommand{\m}{\ensuremath{\mathfrak{m}}}
\newcommand{\CalK}{\ensuremath{\mathcal{K}}}
\newcommand{\Qp}{\ensuremath{\overline{\mathbb{Q}}_p}}
\newcommand{\Tr}{\ensuremath{{\mathrm{Tr}}}}
\newcommand{\q}{\ensuremath{{\mathfrak{q}}}}
\newcommand{\frob}{\ensuremath{{\mathrm{Frob}}}}
\title{On some local properties of sequences of big Galois representations}
\author{S. Aniruddha}
\address{Department of Mathematics, Indian Institute of Science Education and Research Bhopal, Bhopal Bypass Road, Bhauri, Bhopal 462066, Madhya Pradesh,
India}
\email{aniruddha18@iiserb.ac.in}
\thanks{}
\author{Jyoti Prakash Saha}
\address{Department of Mathematics, Indian Institute of Science Education and Research Bhopal, Bhopal Bypass Road, Bhauri, Bhopal 462066, Madhya Pradesh,
India}
\curraddr{}
\email{jpsaha@iiserb.ac.in}
\thanks{}
\begin{document}
\subjclass[2000]{11F80} \keywords{Sequences of Galois representations, ramification, potential equivalence, $m$-power character}
\begin{abstract}
In this article, we prove that for a convergent sequence of residually absolutely irreducible representations of the absolute Galois group of a number field $F$ with coefficients in a domain finite over a power series ring over a $p$-adic integer ring, the set of places of $F$ where some of the representations ramifies has density zero. Using this, we extend a result of Das--Rajan to such convergent sequences. We also establish a strong multiplicity one theorem for big Galois representations. 
\end{abstract}
\maketitle 

\section{Introduction}

\subsection{Motivation}
The study of the representations of the absolute Galois groups of number fields is a central theme in arithmetic. Given a smooth projective variety over a number field $F$, its \'etale cohomology groups with coefficients in $\mathbb Q_p$ is a finite dimensional vector space over $\mathbb Q_p$
and 
carries a continuous action of the absolute Galois group of $F$. 
Mazur introduced deformation theory of Galois representations, and under suitable conditions, he proved that the universal deformation has coefficients in complete local Noetherian rings \cite{MazurDeformingGaloisRepr}. 
Hida proved that the $p$-ordinary normalized Hecke eigen cusp forms can be interpolated \cite{HidaGalrepreord, HidaIwasawa}. Hida's work has been extended further by Coleman \cite{ColemanClassicalOverconvergentModularForms}, Coleman and Mazur \cite{ColemanMazurEigencurve}. 
In these works, representations and pseudorepresentations of the absolute Galois groups of number fields with coefficients in rings of large Krull dimension have been studied. 
In this article, we study certain properties of continuous representations of the absolute Galois groups of number fields with coefficients in such rings.

\subsubsection{Places of ramification}

The continuous representations of the absolute Galois groups of number fields, that are of geometric origin, are known to be unramified almost everywhere. Ramakrishna constructed semisimple representations which are ramified at infinitely many places \cite{ramakrishna}. Soon after, Khare and Rajan proved that the set of places of ramification of a continuous semisimple representation of the absolute Galois group of a number field with coefficients in a $p$-adic number field has density zero \cite[Theorem 1]{KhareRajan}. There are related results in different contexts that were obtained by several authors. 
In 2003, Khare showed that for a converging sequence of residually absolutely irreducible $p$-adic representations of the absolute Galois group of a number field $F$ \cite[Definition 1]{Khare-limit}, the set of places of $F$ where at least of one of them ramifies has density zero \cite[Proposition 1]{Khare-limit}. This can be thought of as a refinement of the result of Khare--Rajan. 
The result of Khare--Rajan has also been extended to representations with coefficients in valuation rings of mixed characteristic by Khare--Larsen--Ramakrishna \cite[Theorem 2.5]{KhareLarsenRamakrishnaTranscendentalEllAdic}. 
Further, a related result has also been obtain by  Bella\"{\i}che--Chenevier--Khare--Larsen \cite[Theorem 3.7]{BCKL}.

\subsubsection{Potential equivalence and $m$-power characters}

In a recent work, Das and Rajan proved that for representations of a group with coefficients in a non-archimedean local field, the notions of being potentially equivalent and elementwise potentially equivalent are equivalent \cite[Theorem 2]{das2021finiteness}. Further, for the representations of the absolute Galois groups of number fields, they established
the equivalence between the property of being potentially equivalent, 
having equal $m$-power traces at the Frobenious conjugacy classes at a set of places of density one, and being $m$-trace equivalent \cite[Theorem 4]{das2021finiteness}.

\subsubsection{Big Galois representations}

In 1980's, Hida proved that the ordinary modular forms vary in families. More precisely, for each positive integer $N$ and a prime $p$ with $p\nmid N$ and $Np\geq 4$, he constructed the universal $p$-ordinary Hecke algebra of tame conductor $N$ and showed that the set of its arithmetic specializations are in one-to-one correspondence with the $p$-ordinary $p$-stabilized eigen cusp forms of tame level a divisor of $N$, and
the Galois representations associated with such forms can be interpolated by a big Galois representation through the corresponding arithmetic specializations \cite{HidaGalrepreord, HidaIwasawa}. 
The study of $p$-adic families of automorphic forms have been pursued further by Hida \cite{HidaControThmPNearlyOrdinaryCohGroupSLn}, Coleman \cite{ColemanClassicalOverconvergentModularForms}, Coleman and Mazur \cite{ColemanMazurEigencurve}, Buzzard \cite{BuzzardEigenvarieties}, Chenevier \cite{ChenevierGLn}, Emerton \cite{EmertonEigenVariety}, Urban \cite{UrbanEigenvarieties} et. al. This motivates to look for analogues of the results of Khare--Rajan, Khare et. al. for representations having coefficients in complete local Noetherian rings, for instances, the power series rings over a $p$-adic integer ring. 
It has recently been established by the second author that the result of Khare--Rajan has an analogue in the context of representations with coefficients in rings that are of finite type over power series rings over $p$-adic integer rings \cite[Theorem 1]{saha-den}.

\subsection{Results obtained}
In this article, we show that \cite[Proposition 1]{Khare-limit} also holds for sequences of representations with coefficients in rings which are finite over power series rings over $p$-adic integer rings. More precisely, we establish that if $\calO$ is an integral domain which is finite over a power series ring with coefficients in a $p$-adic integer ring, then for a converging sequence $\{\rho_i\}$ of residually absolutely irreducible representations of the absolute Galois group of a number field $F$ with coefficients in $\calO$, the set of finite places of $F$ where at least of one of the representations $\{\rho_i\}$ ramifies has density zero. 
We refer to \cref{thm:seq-ram}. This result is established by applying \cref{prop:den-of-c-rho-m}, which is an extension of \cite[Proposition 1]{KhareRajan}. 

In Section \ref{Sec:PotenEq}, 
we prove results analogous to some of the results of Das and Rajan \cite{das2021finiteness} for representations with coefficients in rings which are finite over power series rings over $p$-adic integer rings or in rings over affinoid algebras over $p$-adic number fields. 
The proofs of these extensions make use of the results of \cite{das2021finiteness}. We end this section with an extension of \cite[Theorem 4]{das2021finiteness} to a convergent sequence of residually absolutely irreducible Galois representations (\cref{Thm:PotEq=LocPotEq=FromM=MTrace}).

In Section \ref{Sec:StrnogMult1}, we obtain a strong multiplicity one result for big Galois representations (\cref{Thm:StrongMult}). This is an analogue of a result of Rajan \cite[Theorem 1]{RajanStrongMultOne}, who established an analogue of a conjecture of Ramakrishnan in the context of $\ell$-adic Galois representations. We obtain \cref{Thm:StrongMult} as a consequence of \cite[Theorem 1]{RajanStrongMultOne}.

\section{Density of the places of ramification}

In the following, $L$ denotes a $p$-adic number field, and $\calO_L$ denotes its ring of integers. Let $\calO$ be a local domain which is finite over the power series ring $\calO_L[[X_1, \ldots, X_s]]$. Denote the maximal ideal of $\calO$ by $\frakm$, the residue field of $\calO$ by $k$ and the fraction field of $\calO$ by $\calK$. We fix an algebraic closure $\overline{\calK}$ of $\calK$. Let $F$ be a number field with a fixed algebraic closure $\overline F$. Denote  by $G_F$ the absolute Galois group $\mathrm{Gal}(\overline F/F)$ of $F$. Given a place $\q$ of $F$, denote $F_\q$ for the completion of $F$ with respect to the $\q$-adic topology on $F$. 
The decomposition group of $F$ at $\frakq$ is denoted by $G_{F_\frakq}$, which is unique up to conjugates.
Following \cite{Khare-limit}, we introduce the notion of uniform trace convergence for a sequence of representations with coefficients in $\calO$.

 \begin{definition}\label{def:UniTraceConvegence}
     A sequence of Galois representations $\varrho_i:G_F\to \GLn(\calO)$  \emph{uniformly trace converges} to a representation $\varrho:G_F\to \GLn(\calO)$ if the trace of $\varrho_i$ converges to the trace of $\varrho$ uniformly over $G_F$, in the $\m$-adic topology on $\calO$, as $i\to \infty$.
 \end{definition}

\begin{remark}
    Note that \cref{def:UniTraceConvegence} of uniform trace convergence makes sense for representations with coefficients in a local domain $(A,\fraka)$. For another notion related to \textit{uniform trace convergent sequences}, we refer to \cite[Definition 1.1]{BCKL}. 
\end{remark}

In this section, we establish the following result. 
 \begin{theorem}\label{thm:seq-ram}
        Let $\rho_i:G_F\to \GLn(\calO)$ be a sequence of residually absolutely irreducible continuous representations converging to a continuous representation $\rho:G_F\to \GLn(\co)$.  The set of places of $F$ where at least one of the representations $\rho_i$ ramifies has density zero.
 \end{theorem}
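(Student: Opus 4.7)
The plan is to adapt the argument of \cite[Proposition 1]{Khare-limit} to the big coefficient setting, with Proposition \cref{prop:den-of-c-rho-m} playing the role of the Khare--Rajan density-zero theorem. Denoting by $S(\varrho)$ the set of places of $F$ where a representation $\varrho$ ramifies, the goal is to show that $\bigcup_{i \geq 1} S(\rho_i)$ has density zero.

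First, I would combine the convergence of $\rho_i$ to $\rho$ (in the sense of \cref{def:UniTraceConvegence}) with residual absolute irreducibility to show that for each $m \geq 1$ there exists $N(m)$ such that $\rho_i \bmod \frakm^m \cong \rho \bmod \frakm^m$ as representations into $\GLn(\calO/\frakm^m)$ for all $i \geq N(m)$ (in particular, $\rho$ is itself residually absolutely irreducible). The underlying input is a Carayol--Serre--Nyssen type theorem over the Artinian local ring $\calO/\frakm^m$: given absolute irreducibility of the residual representation, a continuous representation into $\GLn$ of such a ring is determined up to conjugation by its trace. Applying Proposition \cref{prop:den-of-c-rho-m} to $\rho$ then gives, for each $m$, a density-zero (in fact finite, since $\calO/\frakm^m$ is a finite ring) set $T_m$ of places outside which $\rho(I_\frakq) \subseteq I_n + M_n(\frakm^m)$. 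Combined with the first step, the same inclusion $\rho_i(I_\frakq) \subseteq I_n + M_n(\frakm^m)$ then holds outside $T_m$ for every $i \geq N(m)$.

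I would next decompose
\[
\bigcup_{i \geq 1} S(\rho_i) = \Bigl(\bigcup_{i < N(1)} S(\rho_i)\Bigr) \cup \Bigl(\bigcup_{i \geq N(1)} S(\rho_i)\Bigr).
\]
The first union is a finite union of sets of density zero, each $S(\rho_i)$ being of density zero by the single-representation result \cite[Theorem 1]{saha-den}; so this piece is of density zero.

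The hard part is bounding the tail $\bigcup_{i \geq N(1)} S(\rho_i)$. For $i \geq N(1)$ and $\frakq \notin T_1$ with $\frakq \nmid p$, the image $\rho_i(I_\frakq)$ lies in the pro-$p$ group $I_n + M_n(\frakm)$, so the wild inertia (pro-$\ell$ for $\ell \neq p$) maps trivially and $\rho_i|_{I_\frakq}$ factors through the pro-$p$ quotient of the tame inertia. Consequently $\rho_i(I_\frakq)$ is topologically cyclic, generated by some $u_i \in I_n + M_n(\frakm)$ subject to the tame Frobenius relation $\rho_i(\mathrm{Frob}_\frakq)\, u_i\, \rho_i(\mathrm{Frob}_\frakq)^{-1} = u_i^{N\frakq}$. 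Using the mod-$\frakm^m$ agreement of $\rho_i$ with $\rho$ (which pushes $u_i$ into increasingly deep congruence subgroups as $m$ grows) together with a graded analysis of the conjugation action of $\rho_i(\mathrm{Frob}_\frakq) \equiv \rho(\mathrm{Frob}_\frakq) \pmod{\frakm^m}$ on the successive quotients $M_n(\frakm^m/\frakm^{m+1})$, following Khare's strategy in the $p$-adic case, one should conclude that $u_i = 1$ outside a density-zero set of $\frakq$ independent of $i$. Carrying out this graded analysis in the big coefficient ring $\calO$ (rather than in a discrete valuation ring) is the principal technical step.
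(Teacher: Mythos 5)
There is a genuine gap, and it stems from a misreading of \cref{prop:den-of-c-rho-m}. That proposition is not the statement that outside a small set one has $\rho(I_\frakq)\subseteq I_n+M_n(\frakm^m)$ --- that inclusion is just continuity plus the finiteness of $\calO/\frakm^m$ and needs no proposition at all. Its actual content concerns the sets $S_{\rho,m}$ of places $\frakq$ (prime to $p$, of inertia degree one) where $\rho\bmod\frakm^m$ is unramified but $\rho|_{G_{F_\frakq}}\bmod\frakm^m$ admits a \emph{ramified lift} to $\GLn(\calO)$: their upper density tends to $0$ as $m\to\infty$. Used correctly, this makes your ``principal technical step'' unnecessary: once Carayol gives $\rho_i\bmod\frakm^m\cong\rho\bmod\frakm^m$ for $i\geq N(m)$, any place $\frakq$ (prime to $p$, inertia degree one, $\rho\bmod\frakm^m$ unramified at $\frakq$) at which some $\rho_i$ with $i\geq N(m)$ ramifies automatically lies in $S_{\rho,m}$, because $\rho_i|_{G_{F_\frakq}}$ itself is a ramified lift of $\rho|_{G_{F_\frakq}}\bmod\frakm^m$. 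This is exactly how the paper argues: fix $\varepsilon>0$, choose $m$ so large that $S_{\rho,m}$ has upper density $<\varepsilon$, split the sequence at $N(m)$, handle the finitely many initial $\rho_i$ by \cite[Theorem 1]{saha-den}, and absorb the tail into $S_{\rho,m}$ together with a density-zero set (inertia degree $>1$, or the finitely many places where $\rho\bmod\frakm^m$ ramifies).

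By contrast, your plan relegates the entire analytic content to the unproved assertion that a tame-inertia/graded analysis over $\calO$ yields $u_i=1$ outside a density-zero set of places independent of $i$. This is precisely the hard point: for a fixed $i$ the congruence $\rho_i\equiv\rho\bmod\frakm^m$ only holds up to the finite level $m$ with $N(m)\leq i$, so splitting the sequence only at $N(1)$ cannot produce the required uniform control, and you never explain how the density of the exceptional set decays with $m$ --- which, over a ring of large Krull dimension, is not a routine adaptation of Khare's DVR argument (the paper itself only obtains it by specializing, via a carefully chosen $\calO_L$-algebra map $\lambda:\calO\to\calO_K$ keeping countably many inertia matrices nontrivial, and then invoking \cite[Proposition 1]{KhareRajan}). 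So while your first step (Carayol, mod-$\frakm^m$ isomorphism, and the head of the sequence via \cite[Theorem 1]{saha-den}) matches the paper, the heart of the proof is missing: either redo the Khare--Rajan/Khare analysis over $\calO$ in full (nontrivial and not sketched convincingly), or use $S_{\rho,m}$ as defined, with $\rho_i$ serving as the ramified lift, and let \cref{prop:den-of-c-rho-m} finish the argument.
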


The result \cite[Proposition 1]{Khare-limit} of Khare on the places of ramification of a sequence of representations with coefficients in a $p$-adic number field extends, in some sense, the result \cite[Theorem 1]{KhareRajan} of Khare--Rajan on the places of ramification of a single semisimple Galois representation. \cref{thm:seq-ram} extends the result  \cite[Proposition 1]{Khare-limit} 
to the places of ramification of a sequence of Galois representations with coefficients in power series rings over $p$-adic integer rings. Moreover, it also extends the recent result \cite{saha-den} on the places of ramification of a big Galois representation to the places of ramification of a sequence of representations having coefficients in large rings. The key ingredient of the proofs of these results is \cite[Proposition 1]{KhareRajan}.

Before proceeding to the proof of  \cref{thm:seq-ram}, we introduce some notations following \cite[p. 602]{KhareRajan}. Let $(A,\fraka)$ be a local domain and $\Gamma$ be a group.
For a representation $\varrho:\Gamma\to \GLn(A)$ and a positive integer $r$, the reduction of $\varrho$ modulo $\fraka^r$ of $\varrho$ is denoted by $\varrho \bmod{\fraka^r}$.

\begin{definition}

Let $(A,\mathfrak{a})$ be a local domain with residue characteristic $p$, equipped with the $\mathfrak{a}$-adic topology. For a continuous representation  $\varrho:G_F\to \GLn(A)$ and a positive integer $r$, let 
$S_{\varrho,r}$ denote the set of finite places $\frakq$ of $F$ not dividing $p$ and having inertia degree $1$ over $\mathbb Q$ such that the representation $\varrho \bmod \fraka^r$ is unramified at $\frakq$ and the representation $\varrho |_{G_{F_\frakq}} \bmod \fraka^r$ admits a ramified lift over $A$, i.e., there exists a ramified representation $G_{F_\frakq} \to \GLn(A)$ whose reduction modulo $\fraka^r$ is isomorphic to $\varrho|_{G_{F_\frakq}} \bmod \fraka^r$. 
\end{definition}

Now we establish the following proposition, which extends \cite[Proposition 1]{KhareRajan}. Next, applying the proposition below, we will prove \cref{thm:seq-ram}.

\begin{proposition}
    \label{prop:den-of-c-rho-m}
        Let $\rho:G_F\to \GLn(\co)$ be a Galois representation which is semisimple after extending scalars to $ \overline{\calK}$. Then, 
        the upper density of $S_{\rho,m}$ tends to zero, as $m\to \infty$. 
\end{proposition}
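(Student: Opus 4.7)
The plan is to adapt the proof of \cite[Proposition 1]{KhareRajan} to the coefficient ring $\calO$, along the lines of \cite[Theorem 1]{saha-den}. The key enabling observation is that since $\calO$ is finite over $\calO_L[[X_1,\ldots,X_s]]$ and $\calO_L$ has finite residue field, the residue field $k$ of $\calO$ is finite; hence $\calO/\frakm^m$ is a finite local ring for every $m\geq 1$, and the reduction $\rho\bmod\frakm^m$ factors through $\Gal(F_m/F)$ for some finite Galois extension $F_m/F$.

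By the Chebotarev density theorem applied to $F_m/F$, the upper density of $S_{\rho,m}$ equals $|B_m|/|\Gal(F_m/F)|$, where $B_m\subseteq\Gal(F_m/F)$ is the union of those conjugacy classes $c$ for which the restriction of $\rho\bmod\frakm^m$ to a decomposition group at a representative of $c$ admits a ramified lift in $\GLn(\calO)$. The task becomes to prove that $|B_m|/|\Gal(F_m/F)|\to 0$ as $m\to\infty$.

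For the local analysis, I would first observe that a tamely ramified lift is essentially ruled out by Hensel's lemma in $\calO$: if a tame-inertia generator were to map to $I+X$ with $X\in\frakm^m M_n(\calO)$, the finite-order-prime-to-$p$ relation $(I+X)^d=I$ factors as $X\cdot U=0$ with $U$ congruent to the unit $d\cdot I$ modulo $\frakm^m$, forcing $X=0$. Hence any ramified lift must be wildly ramified, with the image of wild inertia contained in the pro-$p$ group $I_n+\frakm^m M_n(\calO)$. Combined with the Frobenius conjugation relations on wild inertia and the $\overline{\calK}$-semisimplicity of $\rho$, this yields congruence constraints on the characteristic polynomial of $\rho(\frob_\frakq)$ modulo powers of $\frakm$, mirroring the core of \cite{KhareRajan}.

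The main obstacle will be carrying out the counting of bad conjugacy classes in the non-DVR setting of $\calO$. The DVR-based counting of \cite{KhareRajan} will be replaced by working with Frobenius eigenvalues in $\overline{\calK}$ via the semisimplification and tracking their congruences modulo $\frakm^m$. As $m$ grows, the set of Frobenius eigenvalue configurations compatible with a ramified lift becomes increasingly sparse, and a uniform estimate of the resulting conjugacy-class count should yield $|B_m|/|\Gal(F_m/F)|\to 0$, completing the proof.
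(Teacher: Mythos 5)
There is a genuine gap, and in fact two concrete problems. First, your local analysis at a place $\frakq\nmid p$ is inverted. Since the hypothetical ramified lift reduces modulo $\frakm^m$ to an unramified representation, its inertia image lies in $1+\frakm^m M_n(\calO)$, which is a pro-$p$ group; as wild inertia at $\frakq$ is pro-$\ell$ with $\ell\neq p$, it must die, so every ramified lift is necessarily \emph{tamely} ramified, factoring through the pro-$p$ quotient $\mathbb{Z}_p$ of tame inertia. Your Hensel-type argument ruling out tame lifts assumes the tame generator has finite order prime to $p$ (the relation $(I+X)^d=I$), which is false: the tame quotient is $\prod_{q\neq\ell}\mathbb{Z}_q$ and its image in a pro-$p$ group is a quotient of $\mathbb{Z}_p$, typically of infinite order. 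Taken literally, your argument would show $S_{\rho,m}$ is empty, which cannot be the content of the proposition. A related smaller issue: membership in $S_{\rho,m}$ is governed by the relation $FTF^{-1}=T^{N\frakq}$, so it is not a function of the Frobenius class in $\Gal(F_m/F)$ alone; one must also track $N\frakq$ (e.g.\ by adjoining $p$-power roots of unity) before Chebotarev applies.

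Second, and more seriously, the step you defer --- ``a uniform estimate of the resulting conjugacy-class count should yield $|B_m|/|\Gal(F_m/F)|\to 0$'' --- is precisely the analytic heart of \cite[Proposition 1]{KhareRajan}, and it is exactly the step that does not transfer to $\calO$: in the DVR case the image of $\rho$ is a compact $p$-adic Lie group inside $\GLn(\calO_K)$, and the eigenvalue-congruence locus is a proper analytic subset whose Haar measure is shown to shrink; for $\calO$ finite over $\calO_L[[X_1,\ldots,X_s]]$ the image is merely a compact subgroup of a much larger profinite group and no such Lie-theoretic measure argument is available. No substitute is offered, so the proof does not close. The paper avoids redoing Khare--Rajan altogether: using \cite[Propositions 3 and 4]{saha-den} it chooses a single continuous $\calO_L$-algebra homomorphism $\lambda:\calO\to\Qp$ (with image in some $\calO_K$) such that $\lambda\circ\rho$ is semisimple and $\lambda$ sends each of the countably many inertia witnesses $M_{r,\frakq}$ of the ramified lifts to a nonidentity matrix; this gives $S_{\rho,r}\subseteq S_{\lambda\circ\rho,r}$ for all $r$, and the conclusion then follows by applying \cite[Proposition 1]{KhareRajan} to the specialized representation. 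If you want to salvage your approach, you would need to either prove a density estimate for the bad locus inside the image in $\GLn(\calO)$ directly, or fall back on a specialization argument of this kind.
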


To prove the above result, we show that there exists a continuous map $\lambda: \calO \to \calO_K$ where $\calO_K$ denotes the ring of integers of a $p$-adic number field $K$ (cf. \cref{lem:denseinSpecO}) such that $\lambda\circ \rho$ is semisimple and $S_{\rho, r}$ is contained in $S_{\lambda\circ \rho, r}$ for any integer $r\geq 1$, and use \cite[Proposition 1]{KhareRajan}.

\begin{proof}
By \cite[Proposition 4]{saha-den}, there exists an element $h\in \calO$ such that for any ring homomorphism $\lambda: \calO \to \Qp$ with $\lambda(h) \neq 0$, the representation $\lambda\circ \rho$ is semisimple. Given an integer $m\geq 1$ and an element $\frakq\in S_{\rho, m}$, there exists a ramified representation  $\rho_{r, \frakq}: G_{F_\frakq}\to \GLn(\calO)$ which lifts $\rho|_{G_{F_\frakq}} \bmod \frakm^r$. Let $M_{r, \frakq}$ denote a nonidentity element lying in the image of the inertia subgroup of $G_{F_\frakq}$ under $\rho_{r, \frakq}$. 
Consider the set of matrices $M_{r, \frakq}$ as $r$ ranges over the set of positive integers and $\frakq$ ranges over the elements of $S_{\rho, r}$. Note that this set is countable.

Given countably many nonzero elements of $\calO_L[[X_1, \ldots, X_s]]$, their images are nonzero for some continuous $\calO_L$-algebra homomorphism from $\calO_L[[X_1, \ldots, X_s]]\to \Qp$, as can be seen by considering suitable substitutions (for a proof, see \cite[Proposition 3]{saha-den} for instance). 
Since $\calO$ is finite over $\calO_L[[X_1, \ldots, X_s]]$, it follows that given countably many nonzero elements of $\calO$, their images are nonzero for some continuous $\calO_L$-algebra homomorphism from $\calO \to \Qp$. 
It follows that there exists a continuous $\calO_L$-algebra homomorphism $\lambda:\calO\to \overline{\mathbb{Q}}_p$ 
such that $\lambda(h) \neq 0$ and $\lambda (M_{r, \frakq})$ is not equal to the identity matrix over $\Qp$ for any $r\geq 1$ and for any $\frakq$ lying in $S_{\rho, r}$. This implies that $\lambda\circ \rho$ is continuous and semisimple, and the representation $\lambda\circ \rho_{r, \frakq}$ is ramified, for any $r\geq 1$ and $\frakq\in S_{\rho, r}$. 
Since $\calO$ is finite over $\calO_L[[X_1, \ldots, X_s]]$, the image $\lambda(\calO)$ is contained in the valuation ring $\calO_K$ of some $p$-adic number field $K$.
Note that for any $\frakq\in S_{\rho, r}$, the residual representation $\rho \bmod \frakm$ of $\rho$ is unramified at $\frakq$, and hence the residual representation of $\lambda\circ \rho$ is also unramified at $\frakq$. Moreover, the representation $\lambda \circ \rho_{r, \frakq}$ is a lift of the representation $\lambda \circ \rho |_{G_{F_\frakq}} \bmod \frakm_K^r$ where $\frakm_K$ denotes the maximal ideal of $\calO_K$. This shows that the set $S_{\rho,r}$ is contained in $ S_{\lambda\circ \rho,r}$ for any $r\geq 1$. As $\lambda\circ \rho$ is continuous and semisimple, by \cite[Proposition 1]{KhareRajan}, the upper density of $S_{\lambda\circ \rho,r}$ tends to zero, as $r$ tends to infinity. Hence, the result follows.
\end{proof}

\begin{proof}[Proof of \cref{thm:seq-ram}]
Let $\varepsilon > 0$ be given. 
Since $\{\rho_i\}$ is a sequence of residually absolutely irreducible representations and it converges uniformly to $\rho$, it follows from \cite[Th\'eor\`eme 1]{Carayol} that $\rho$ is also  residually absolutely irreducible. In particular, $\rho \otimes \overline{\mathcal K}$ is semisimple. Hence, by \cref{prop:den-of-c-rho-m}, there exists a positive integer $N_\varepsilon$ such that the upper density of $S_{\rho,j}$ is less than $\varepsilon$ for all $j\geq N_\varepsilon$. 
We fix an integer $j\geq N_\varepsilon$. Since $\{\rho_i\}$ converges to $\rho$, there exists a positive integer $N_j$ such that 
the traces of the representations $\rho_i, \rho$ are equal modulo $\frakm^j$ for any $i \geq N_j$. 
So, for any $i \geq N_j$, the residual representations attached to the representations $\rho_i \bmod \frakm^j, \rho \bmod \frakm^j$ have the same traces, and since $\rho_i$ is residually absolutely irreducible, it follows from \cite[Th\'eor\`eme 1]{Carayol} that the representations $\rho_i \bmod \frakm^j, \rho \bmod \frakm^j$ are isomorphic. 

Let $R_i$ denote the set of primes of $F$ where $\rho_i$ is ramified. Since $\rho_i$'s are residually absolutely irreducible, they are absolutely irreducible. Hence, by \cite[Theorem 1]{saha-den}, the set $R_i$ has density zero. This implies that the finite union $\cup_{1\leq i \leq N_j} R_i$ also has density zero.

Let $R'$ be the set of places $\q\in \cup_{i>N_j}R_i$ such that $\frakq$ has inertia degree $>1$ over $\mathbb Q$ or $\rho\bmod{\m^j}$ is ramified at $\frakq$. Since $\calO$ is finite over $\calO_L[[X_1, \ldots, X_s]]$, it follows that the ring $\calO/\frakm^j$ is finite and hence the set of places of ramification of $\rho\bmod{\m^j}$ is finite. 
Therefore, excluding finitely many elements, the set $R'$ is contained in the set of places of $F$ having inertia degree $> 1$. It follows that $R'$ has density zero. 

Let $\q$ be an element of $R_i\setminus R'$ for some $i>N_j$. Note that $\rho_i$ is a  lift of $\rho\bmod{\m^j}$ to $\GLn(\co)$ and $\rho_i$ is ramified at $\q$. Since $\frakq$ does not lie in $R'$, it follows that $\frakq$ has inertia degree $1$ over $\mathbb Q$ and $\rho\bmod{\m^j}$ is unramified at $\frakq$. This shows that $\q$ lies in $S_{\rho,j}$. Consequently, $\cup_{i>N_j}R_i$ is contained in $R'\cup S_{\rho, j}$. It follows that the upper density of $\cup_{i>N_j}R_i$ is less than $\varepsilon$. Consequently, the set  $\cup_{i\geq 1}R_i$ has upper density $< \varepsilon$. This completes the proof. 
\end{proof}

\begin{remark}
It would be interesting to investigate whether \cref{thm:seq-ram} holds for a converging sequence of representations with coefficients in complete local Noetherian rings of characteristic zero and residue characteristic $p$. 
Note that the notion of uniform trace convergence makes sense for a sequence of representations with coefficients in a local ring. By the Cohen structure theorem, a complete local Noetherian ring is the quotient of a power series ring over a $p$-adic integer ring. But we do not know how to establish \cref{thm:seq-ram} when the coefficient ring $\calO$ is a quotient of a such a power series ring. Our method does not seem to provide any insight. This has been one of the obstructions that has been observed in \cite{saha-den} in an approach towards extending \cite[Theorem 1]{KhareRajan} to representations with coefficients in such rings. 
Further, it would also be interesting to investigate whether \cref{prop:den-of-c-rho-m} holds for a representation with coefficients in a complete local Noetherian rings of characteristic zero and residue characteristic $p$. 
\end{remark}

\section{Potential equivalence of representations}
\label{Sec:PotenEq}

Let $L$ be a $p$-adic number field.
Let $\scrO$ be a domain finite over the power series ring $\calO_L[[X_1, \ldots, X_s]]$ or an affinoid algebra over $L$. 
Let $\scrK$ denote the fraction field of $\scrO$, and $\overline{\scrK}$ be a fixed algebraic closure of $\scrK$. 
We recall some definitions and notations used in \cite{das2021finiteness} in the context of representations of a group with coefficients in $\scrO$. Given a representation $\rho:\Gamma\to \GLn(\scrO)$ of a group $\Gamma$ and an integer $m$, the \emph{$m$-trace} of $\rho$ is the map $\chi_\rho^{[m]}:\Gamma\to \scrO$ defined by $g\mapsto \Tr(\rho(g)^m)$.

\begin{definition}Let $\Gamma$ be a group. Two representations $\rho_1,\rho_2:\Gamma\to \GLn(\scrO)$ are said to be  
    \begin{enumerate}[(i)]
        \item  \emph{$m$-trace equivalent} if $\chi_{\rho_1}^{[m]}=\chi_{\rho_2}^{[m]}$,
        \item  \emph{potentially equivalent} if their restrictions to a finite index subgroup of $\Gamma$ are isomorphic over $\scrK$,
        \item  \emph{elementwise potentially equivalent} if, given an element $g\in \Gamma$, there is a positive integer $m_g$ such that $\rho_1(g)^{m_g}$ and $\rho_2(g)^{m_g}$ are conjugates over $\scrK$.
    \end{enumerate}
\end{definition}

We establish the following result which is an analogue of \cite[Theorem 2]{das2021finiteness}.

\begin{theorem}\label{thm:Genpot=elepot=mtrace}
    Let $\rho_1,\rho_2:\Gamma\to \GLn(\scrO)$ be representations which are semisimple after extending scalars to $\CalK$. Then, the following are equivalent.
    \begin{enumerate}[(a)]
        \item $\rho_1,\rho_2$ are potentially equivalent. 
        \item $\rho_1,\rho_2$ are elementwise potentially equivalent.
        \item There is a positive integer $m$, depending only on $\scrO$ and $n$, such that $\rho_1,\rho_2$ are $m$-trace equivalent.
    \end{enumerate}
\end{theorem}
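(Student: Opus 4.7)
The plan is to reduce the theorem to its analogue over a $p$-adic local field, namely \cite[Theorem 2]{das2021finiteness}. The key tool is an extension of \cite[Proposition 3]{saha-den}: since $\scrO$ is either finite over $\calO_L[[X_1,\ldots,X_s]]$ or an affinoid algebra over $L$, any countable collection of nonzero elements of $\scrO$ can be avoided by some continuous $\calO_L$-algebra (respectively $L$-algebra) homomorphism $\lambda : \scrO \to \Qp$ whose image lies in $\calO_K$ (respectively in $K$) for a $p$-adic number field $K$ of degree over $L$ bounded by some constant $d = d(\scrO)$; combined with \cite[Proposition 4]{saha-den}, such a $\lambda$ can additionally be chosen so that $\lambda\circ\rho_1$ and $\lambda\circ\rho_2$ remain semisimple over $\overline{K}$. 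The implication (a) $\Rightarrow$ (b) is immediate: after replacing the given finite-index subgroup by its normal core of index $N$, one has $g^N \in \Gamma'$ for every $g \in \Gamma$, whence $\rho_1(g)^N$ and $\rho_2(g)^N$ are $\GLn(\scrK)$-conjugate via the witnessing isomorphism.

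Define $m$ to be the maximum, over all $p$-adic fields $K$ with $[K:L] \leq d$, of the universal integer furnished by \cite[Theorem 2]{das2021finiteness} in dimension $n$; this maximum is finite and depends only on $\scrO$ and $n$. For (b) $\Rightarrow$ (c), suppose for contradiction that $\chi_{\rho_1}^{[m]}(g) \neq \chi_{\rho_2}^{[m]}(g)$ for some $g \in \Gamma$. The conjugating matrix $P_g \in \GLn(\scrK)$ witnessing elementwise potential equivalence at $g$ may, after clearing denominators, be taken in $M_n(\scrO)$ with $\det P_g \neq 0$. Choose $\lambda$ avoiding the three nonzero elements $\chi_{\rho_1}^{[m]}(g) - \chi_{\rho_2}^{[m]}(g)$, $\det P_g$, and the semisimplicity witness of \cite[Proposition 4]{saha-den}. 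Then $\lambda\circ\rho_1$ and $\lambda\circ\rho_2$ are semisimple over $\overline{K}$ and elementwise potentially equivalent (at $g$ directly via $\lambda(P_g)$, and at every other $h \in \Gamma$ by inheritance of the characteristic polynomials of the $m_h$-th powers from $\scrO$, combined with the semisimplicity of the specializations), yet their $m$-traces differ at $g$, contradicting Das-Rajan. The implication (c) $\Rightarrow$ (a) is parallel: $m$-trace equivalence is preserved under any $\lambda$, so Das-Rajan applied to the specialized pair over $K$ yields potential equivalence over $K$; the resulting equality of characters on a finite-index subgroup of $\Gamma$ is then transferred back to $\scrK$ via a density argument over the available specializations $\lambda$, whence potential equivalence over $\scrK$ by the principle that characters determine semisimple representations.

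The principal obstacle is that $\lambda$ can be chosen to avoid only countably many obstructions, while $\Gamma$ may be uncountable, and an elementwise condition cannot be preserved witness by witness for all $h \in \Gamma$ under a single specialization. This is circumvented by reducing to intrinsic data (characteristic polynomials and semisimplifications of $m_h$-th powers) that are automatically preserved under any $\lambda$, and by exploiting that the contradiction in (b) $\Rightarrow$ (c) needs to be forced at a \emph{single} element $g$, so the countable-control afforded by the specialization principle is sufficient. The uniform boundedness of the Das-Rajan constant as $K$ ranges over extensions of $L$ of bounded degree, which underpins the universal $m$, also needs to be verified by inspecting the dependence of that constant on roots of unity in $K$.
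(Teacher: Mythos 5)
Your overall strategy (specialize to a $p$-adic field via a dense family of $\calO_L$-algebra homomorphisms $\lambda:\scrO\to\calO_K$ with $K/L$ of bounded degree, invoke Das--Rajan there, and recover identities over $\scrO$ by density) is the same as the paper's, but two of your steps have genuine gaps. First, in (b) $\Rightarrow$ (c) you need the specialized pair $\lambda\circ\rho_1,\lambda\circ\rho_2$ to satisfy the hypothesis of the representation-level theorem of Das--Rajan, i.e.\ elementwise potential equivalence at \emph{every} $h\in\Gamma$; your justification at $h\neq g$ — equality of characteristic polynomials of the $m_h$-th powers plus ``semisimplicity of the specializations'' — does not work. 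Semisimplicity of the representation $\lambda\circ\rho_i$ does not make the individual matrices $\lambda(\rho_i(h))^{m_h}$ semisimple (think of a unipotent element in an irreducible representation), and matrices with equal characteristic polynomials need not be conjugate, so elementwise potential equivalence is not inherited by a single specialization. The repair is to notice that your contradiction only requires the conclusion at the one element $g$, where the conjugacy genuinely specializes via $\lambda(P_g)$: apply the \emph{matrix-level} statement \cite[Lemma 3]{das2021finiteness} to $\lambda(\rho_1(g))^k,\lambda(\rho_2(g))^k$ to get equality of traces of $m$-th powers for every admissible $\lambda$, and then density kills $\bigl(\Tr\rho_1(g)^m-\Tr\rho_2(g)^m\bigr)\det P_g$. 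This is exactly the paper's route (its Lemma~\ref{lem:weaklemma1}, proved from Lemma~\ref{lem:denseinSpecO}), and it needs no semisimplicity hypothesis at all for this implication. A minor further point: your $m$ should be a common multiple (e.g.\ the constant attached to the compositum of all extensions of $L$ of degree at most $d$, or at least the lcm), not the maximum over the fields $K$, since conjugacy of $m'$-th powers only propagates to multiples of $m'$.

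Second, your (c) $\Rightarrow$ (a) is not complete as written: the finite-index subgroup on which the specialized representations become isomorphic depends on $\lambda$, so ``a density argument over the available specializations'' does not by itself produce one subgroup of $\Gamma$ on which $\Tr\rho_1=\Tr\rho_2$ holds in $\scrO$; you would need some uniformity (e.g.\ a bound on the index, or a canonical choice of subgroup independent of $\lambda$) that you neither state nor prove. The paper sidesteps this entirely: since $m$-trace equivalence is already an identity over $\scrO$, it applies \cite[Theorem 1]{das2021finiteness} directly to $\rho_1,\rho_2$ over the characteristic-zero field $\scrK$ (after extending scalars), with no specialization and no transfer-back step. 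Unless you supply the missing uniformity, you should replace your argument for this implication by that direct citation.
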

\begin{proof}
It is clear that (a) implies (b). Using \cite[Theorem 1]{das2021finiteness}, the implication from (c)  to (a) follows. To obtain the implication from (b) to (c), 
it is enough to prove the following \cref{lem:weaklemma1}, a weaker version of \cite[Lemma 3]{das2021finiteness} for matrices with coefficients in $\scrO$.
\end{proof}

\begin{lemma}\label{lem:weaklemma1}
There is a positive integer $m$, depending only on $\scrO$ and $n$, with the following property: for $g_1,g_2\in \GLn(\scrO)$, if there is some positive integer $k$ such that $g_1^k$ and $g_2^k$ are conjugates in $\GLn(\scrK)$, then $g_1^m$ and $g_2^m$ have the same traces.  
\end{lemma}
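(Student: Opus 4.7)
The plan is to reduce the lemma to bounding the roots of unity that can appear in bounded-degree algebraic extensions of $\scrK$. Let $\alpha_1,\ldots,\alpha_n$ and $\beta_1,\ldots,\beta_n$ be the eigenvalues (with multiplicities) of $g_1$ and $g_2$ in $\overline{\scrK}$. The hypothesis forces the equality of the characteristic polynomials of $g_1^k$ and $g_2^k$, hence the equality of multisets $\{\alpha_i^k\}=\{\beta_j^k\}$. After relabeling via a permutation $\sigma\in S_n$, each $\zeta_i:=\alpha_i\beta_{\sigma(i)}^{-1}$ is therefore a $k$-th root of unity in the splitting field $M:=\scrK(\alpha_1,\ldots,\alpha_n,\beta_1,\ldots,\beta_n)$, whose degree over $\scrK$ is at most $(n!)^2$, a quantity depending only on $n$.

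The crux is a uniform bound of the following shape: there exists an integer $N=N(\scrO,n)$ such that every root of unity in any extension of $\scrK$ of degree at most $(n!)^2$ has order dividing $N$. Granting this, take $m:=N$. For any $k\geq 1$, each $\zeta_i$ has order dividing $\gcd(k,N)$, which divides $m$, so $\zeta_i^m=1$ and $\alpha_i^m=\beta_{\sigma(i)}^m$; summing then gives $\Tr(g_1^m)=\Tr(g_2^m)$, as required.

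To establish the uniform bound, I would first show that $L_0:=\overline{L}\cap\scrK$ is a finite extension of $L$. In both cases, $\scrK$ is finite over the fraction field $K_0$ of $\calO_L[[X_1,\ldots,X_s]]$ or of the Tate algebra $L\langle T_1,\ldots,T_d\rangle$. For $\alpha\in K_0$ algebraic over $L$, writing $\alpha=f/g$ and reading off $a_I\alpha=b_I$ from $g\alpha=f$ forces $\alpha\in L$, so $\overline{L}\cap K_0=L$. A standard linear-disjointness argument then yields $[L_0:L]\leq[\scrK:K_0]<\infty$, and applied to any extension $M/\scrK$ of degree at most $(n!)^2$ gives $[\overline{L}\cap M:L]\leq (n!)^2[L_0:L]$. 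Every root of unity in $M$ is algebraic over $L$, hence lies in $\overline{L}\cap M$, a $p$-adic number field of uniformly bounded degree; the standard $p$-adic facts that the prime-to-$p$ roots of unity embed into its finite residue field, while $\mu_{p^r}\subseteq\overline{L}\cap M$ forces degree at least $(p-1)p^{r-1}$ over $\mathbb{Q}_p$, then supply the required $N$. The main obstacle is the identity $\overline{L}\cap K_0=L$, which the elementary coefficient-comparison above handles uniformly in both the power series and the affinoid settings.
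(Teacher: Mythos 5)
Your route is genuinely different from the paper's: the paper clears denominators of the conjugating matrix, specializes through the $\co_L$-algebra homomorphisms $\scrO\to K$ furnished by \cref{lem:denseinSpecO} (whose kernels are dense in $\mathrm{Spec}(\scrO)$), and invokes Das--Rajan's Lemma~3 over the local field $K$, so that $m$ is the number of roots of unity in the compositum of all degree-$n$ extensions of $K$. You instead work directly over $\overline{\scrK}$: from the equality of the characteristic polynomials of $g_1^k$ and $g_2^k$ you extract the ratios $\zeta_i=\alpha_i\beta_{\sigma(i)}^{-1}$, which are roots of unity in an extension of $\scrK$ of degree at most $(n!)^2$, and reduce everything to a uniform bound on roots of unity in such extensions. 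That reduction, and the deduction $\zeta_i^m=1\Rightarrow \Tr(g_1^m)=\Tr(g_2^m)$, are correct, and in fact yield slightly more than the lemma asks (equality of the characteristic polynomials of $g_1^m$ and $g_2^m$); in effect you reprove the local-field input of Das--Rajan directly over $\scrK$ instead of quoting it after specialization.

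There is, however, one genuine gap: the identity $\overline{L}\cap K_0=L$ is not established by your coefficient comparison. Writing $\alpha=f/g$ and ``reading off $a_I\alpha=b_I$'' from $g\alpha=f$ treats $\alpha$ as a scalar acting coefficientwise on power series, i.e.\ it presupposes that $\alpha$ is a \emph{constant} power series --- which is precisely what has to be proved for an element of $K_0$ that is merely assumed algebraic over $L$; for a general $\alpha\in K_0$ the relation $g\alpha=f$ carries no coefficientwise information (it is just the definition of $\alpha$). The statement is true and can be repaired as follows: if $\alpha\in K_0$ is algebraic over $L$, then $c\alpha$ is integral over $\co_L$ for some nonzero $c\in\co_L$, hence integral over the normal ring $\co_L[[X_1,\ldots,X_s]]$ and therefore lies in it; substituting $\alpha$ into the factorization of its minimal polynomial over $\overline{L}$ and using that $\overline{L}[[X_1,\ldots,X_s]]$ is a domain forces $\alpha$ to equal one of the (constant) roots, and its constant coefficient lies in $L$, so $\alpha\in L$. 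The same argument works for the Tate algebra. Two smaller points: triviality of the intersection does not by itself give linear disjointness, so to bound $[\overline{L}\cap M:L]$ you should argue via a primitive element, whose minimal polynomial over $K_0$ has coefficients in $K_0$ algebraic over $L$, hence in $L$; this gives $[\overline{L}\cap M:L]\le [M:K_0]\le (n!)^2[\scrK:K_0]$ (the bound involves $[\scrK:K_0]$, not $[L_0:L]$, though either way it depends only on $\scrO$ and $n$). With these repairs your proof is complete.
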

The proof of \cref{lem:weaklemma1} requires the following general lemma. 

\begin{lemma}\label{lem:denseinSpecO}
    There is a finite extension $K/L$ such that the kernels of all the $\co_L$-algebra homomorphisms from $\scrO$ to $K$ form a dense subset of $\mathrm{Spec} (\scrO)$.
\end{lemma}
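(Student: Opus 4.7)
The plan is to descend to a normal base ring $R$ underneath $\scrO$, prove density of kernels of ``evaluation at a rational point'' maps on $R$, and lift each such evaluation to $\scrO$ while keeping the image inside one finite extension of $L$. For $\scrO$ finite over $\calO_L[[X_1,\ldots,X_s]]$ I take $R = \calO_L[[X_1,\ldots,X_s]]$; for $\scrO$ an affinoid $L$-algebra (assumed to be a domain), Noether normalization for affinoid algebras yields a finite injection $R := L\langle Y_1,\ldots,Y_d\rangle \hookrightarrow \scrO$. In both cases $R$ is a regular (hence normal) Noetherian domain and $\scrO$ is finite, hence integral, over $R$.

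Let $m$ be a number of generators of $\scrO$ as an $R$-module and $n := [\mathrm{Frac}(\scrO) : \mathrm{Frac}(R)]$. Since $R$ is normal and we are in characteristic zero, every $f \in \scrO$ satisfies a monic characteristic polynomial $P_f(T) \in R[T]$ of degree $n$; its constant term is $\pm N(f)$, with $N$ the norm from $\mathrm{Frac}(\scrO)$ to $\mathrm{Frac}(R)$, and $N(f) \in R$ is nonzero whenever $f \neq 0$.

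Next I would establish density of the kernels of evaluation maps on $R$. In the first setting, evaluations $\mu: R \to \calO_L$ sending $X_i \mapsto a_i$ for $(a_1,\ldots,a_s)\in \frakm_L^s$ are continuous $\calO_L$-algebra maps, and an induction on $s$ --- writing a nonzero power series as a series in $X_s$ whose coefficients in $\calO_L[[X_1,\ldots,X_{s-1}]]$ are not all zero --- shows that for every nonzero $c \in R$ some such evaluation does not annihilate $c$ (the argument is the one used in \cite[Proposition 3]{saha-den}). The Tate algebra case is identical, with tuples in $\calO_L^d$ playing the role of tuples in $\frakm_L^s$.

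Finally, I would take $K \subset \overline{L}$ to be the compositum of all extensions of $L$ of degree $\leq m$. Since a $p$-adic field has only finitely many extensions of each given degree (a consequence of Krasner's lemma), $K/L$ is a single finite extension. For a nonzero $f \in \scrO$, pick $\mu: R \to L$ with $\mu(N(f)) \neq 0$. By lying-over for the integral extension $R \hookrightarrow \scrO$, there is a prime of $\scrO$ contracting to $\ker\mu$, so $\scrO \otimes_R L$ (with $L$ an $R$-algebra via $\mu$) is a nonzero Artinian $L$-algebra of $L$-dimension at most $m$; hence each of its residue fields has degree at most $m$ over $L$ and embeds into $K$, and choosing any maximal ideal produces an $\calO_L$-algebra map $\lambda: \scrO \to K$ extending $\mu$. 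Applying $\lambda$ to $P_f(f) = 0$ shows $\lambda(f)$ is a root of $\mu(P_f)$ with nonzero constant term $\pm \mu(N(f))$, whence $\lambda(f) \neq 0$ and $f \notin \ker\lambda$. Since $\scrO$ is a domain and $f$ was arbitrary, the kernels are dense in $\mathrm{Spec}(\scrO)$. The main obstacle is the uniformity of $K$: for each individual $f$ one may find some finite extension in which $f$ does not vanish, but only the Krasner finiteness of extensions of $L$ of bounded degree, combined with the uniform dimension bound $m$, lets us choose a single $K$ that works for every nonzero $f$ simultaneously.
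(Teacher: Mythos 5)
Your proof is correct and follows the same overall strategy as the paper: reduce to the base ring $R=\co_L[[X_1,\ldots,X_s]]$ (or a Tate algebra, via Noether normalization), detect nonzero elements by evaluation maps on $R$, and obtain a single finite extension $K$ from a uniform degree bound together with the Krasner-type finiteness of extensions of $L$ of bounded degree. The only real difference is in the middle step: the paper simply cites \cite[Proposition 3]{saha-den} for the existence of a homomorphism $\lambda:\scrO\to\Qp$ extending a base evaluation with $a\notin\ker\lambda$, and bounds the image degree by the monic degree-$r$ equation (determinant trick) satisfied by every element of $\scrO$; you instead prove this lifting step yourself, using the norm/characteristic polynomial over the normal base to force $\lambda(f)\neq 0$, lying over (via $\scrO\otimes_R L\neq 0$) to produce $\lambda$, and the module rank $m$ to bound the residue field degree. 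Your version is somewhat more self-contained (it only needs the evaluation-density statement for the base ring, and it treats the affinoid case explicitly rather than by analogy), at the cost of invoking normality of $R$ for the characteristic polynomial, which the paper's elementwise integral equations avoid; both yield the same uniform bound and hence the same choice of $K$.
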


\begin{proof}
    We prove the result for $\scrO$ finite over a $\co_L[[X_1,\ldots,X_s]]$, the affinoid algebra case is similar as affinoid algebras are finite over Tate algebras (due to Noetherian normalization, \cite[Proposition 3, p. 32]{BoschFormalRigidGeom}). Let $\{x_1,\ldots,x_r\}$ be a generating set of $\scrO$ over $\co_L[[X_1,\ldots,X_s]]$. 
Let $K$ denote the compositum of all subfields of $L'$ of $\Qp$ containing $L$ as a subfield such that the extension $L'/L$ has degree $\leq r$. By Krasner's lemma, $K$ is a finite extension of $L$. 
We see that any element $a\in \scrO$ satisfies a monic polynomial of degree $r$ with coefficients in $\co_L[[X_1,\ldots,X_s]]$. Therefore, if $\lambda:\scrO\to \Qp$ is an $\co_L$-algebra homomorphism, then $\lambda(a)$ satisfies a monic polynomial of degree $r$ over $\co_L$, for all $a\in \scrO$.
This implies that any $\co_L$-algebra homomorphism $\lambda:\scrO\to \Qp$ has image lying inside $\co_K$. Given a non-zero ideal $I$ of $\scrO$ and a non-zero element $a\in I$, by \cite[Proposition 3]{saha-den}, there exists $\lambda:\scrO\to \Qp
$, extending some $\co_L$-algebra homomorphism from $\co_L[[X_1,\ldots,X_s]]\to \co_L$, such that $a\not\in \ker \lambda$. Therefore, given a non-zero ideal $I$ of $\scrO$, there is an $\co_L$-homomorphism $\lambda:\scrO\to K$ such that $I\not\subseteq \ker \lambda$.
\end{proof}

\begin{remark}
By the above lemma, we see that if $a$ is an element of $\scrO$ such that $\lambda(a)=0$ for all $\lambda:\scrO\to K$ as in the hypothesis of \cref{lem:denseinSpecO}, then $a=0$.
\end{remark}

\begin{proof}[Proof of \cref{lem:weaklemma1}]
    From our hypothesis, there exists a matrix $M\in \GLn(\scrK)$ such that 
    $$Mg_1^k=g_2^kM.$$
    By clearing out the denominators (and replacing $M$, if necessary), we can assume that $M$ lies in $M_n(\scrO)$ with $\det M\neq 0$. By \cref{lem:denseinSpecO}, there exists a finite extension $K/L$ such that the kernels of all the $\co_L$-algebra homomorphisms from $\scrO$ to $K$ form a dense subset of $\mathrm{Spec} (\scrO)$. Let $m$ denote the number of roots of unity in the compositum of all degree $n$ extensions of $K$. We prove that the traces of $g_1^m$ and $g_2^m$ are equal.

Let $\lambda$ be an $\co_L$-algebra homomorphism from $\scrO$ to $K$. Suppose 
  $\lambda (\det M)\neq 0$. Then, 
   the matrices $\lambda( g_1)^k$ and $\lambda( g_2)^k$ are conjugates in $\GLn(K)$. By \cite[Lemma 3]{das2021finiteness}, the matrices $\lambda (g_1)^m, \lambda (g_2)^m$ are conjugates over $\GLn(K)$. In particular, we have $\Tr(\lambda (g_1^m))=\Tr(\lambda( g_2^m))$. So, the element $(\Tr(g_1^m)-\Tr(g_2^m))\det M$ lies in the kernels of all $\co_L$-algebra homomorphisms $\lambda:\scrO\to K$, and hence, it is equal to zero by \cref{lem:denseinSpecO}. This shows that the traces   of $g_1^m$ and $g_2^m$ are equal.   
\end{proof}

Let $F$ be a number field and for a non-archimedean place $v$ of $F$, let $F_v$ denote the completion of $F$ with respect to $v$. For an inclusion $i:\overline{F}\hookrightarrow \overline{F}_v$, let $i_v:G_{F_v}\hookrightarrow G_F$ denote the induced inclusion. 
For an unramified place $v$ of $F$, let $\frob_v$ denote the Frobenius element at $v$ in the Galois group $G_{F_v}$. 

\begin{definition}[cf. \cite{das2021finiteness}]
    Let $T$ be a set of places of $F$.
    Two representations $\rho_1,\rho_2:G_F\to \GLn(\scrO)$ are said to be \emph{locally potentially equivalent at $T$}  if  $\rho_1\circ i_v,\rho_2\circ i_v$ are potentially equivalent for all $v\in T$. 
\end{definition}

Note that the above definition is independent of the choice of the inclusion $i:\overline{F}\hookrightarrow \overline{F}_v$.
The following result is an analogue of 
\cite[Theorem 4]{das2021finiteness} which relates potential equivalence with equality of $m$-traces for representations with coefficients in rings of large Krull dimension.

\begin{theorem}
\label{Thm:PotEq=LocPotEq=FromM=MTrace}
Let $F$ be a number field and let $\rho_1,\rho_2:G_F\to \GLn(\scrO)$ be continuous representations which are semisimple after extending scalars to $\overline{\scrK}$.
Then, the following are equivalent:
    
    \begin{enumerate}[(a)]
        \item $\rho_1$ and $\rho_2$ are potentially equivalent.
        \item There exists a set $T$ of places of $F$ with upper density one such that $\rho_1,\rho_2$ are locally potentially equivalent at $T$. 
        \item There exists a positive integer $m$, depending only on $\scrO$ and $n$, and a set $T$ of places of $F$ with upper density one, where $\rho_i$'s are unramified, such that 
        $$\Tr(\rho_1(\frob_v^m))=\Tr(\rho_2(\frob_v^m)) \hspace{5mm}\textnormal{for all }v\in T.$$
        \item The representations $\rho_1,\rho_2$ are $m$-trace equivalent for some positive integer $m$ that depends only on $\scrO$ and $n$.
    \end{enumerate}
\end{theorem}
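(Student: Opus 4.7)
The plan is to prove the cycle $(a)\Rightarrow(b)\Rightarrow(c)\Rightarrow(d)\Rightarrow(a)$, using \cref{thm:Genpot=elepot=mtrace} to close the cycle at both ends. Both $(a)\Rightarrow(b)$ and $(d)\Rightarrow(a)$ are immediate: if $\rho_1|_H\cong\rho_2|_H$ over $\scrK$ on a finite index subgroup $H\leq G_F$, then $G_{F_v}\cap H$ has finite index in $G_{F_v}$ for every place $v$, so local potential equivalence holds on the set of all finite places, proving $(a)\Rightarrow(b)$; and $(d)\Rightarrow(a)$ is a direct invocation of \cref{thm:Genpot=elepot=mtrace} with $\Gamma=G_F$.

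For $(b)\Rightarrow(c)$, I would first apply \cite[Theorem 1]{saha-den} to note that the set of places at which at least one of $\rho_1,\rho_2$ ramifies has density zero; intersecting its complement with $T$ yields a subset $T'\subseteq T$ of upper density one consisting only of places at which both representations are unramified. For each $v\in T'$, local potential equivalence provides a finite index subgroup $H_v\leq G_{F_v}$ and an integer $k_v\geq 1$ with $\frob_v^{k_v}\in H_v$, so that $\rho_1(\frob_v)^{k_v}$ and $\rho_2(\frob_v)^{k_v}$ become conjugates in $\GLn(\scrK)$. Applying \cref{lem:weaklemma1} to $\rho_1(\frob_v),\rho_2(\frob_v)\in \GLn(\scrO)$ then produces a positive integer $m$, depending only on $\scrO$ and $n$, with $\Tr(\rho_1(\frob_v)^m)=\Tr(\rho_2(\frob_v)^m)$ for every $v\in T'$, which is precisely assertion (c).

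The main obstacle is the implication $(c)\Rightarrow(d)$, where a density-one equality of Frobenius $m$-power traces must be upgraded to a global identity on $G_F$. My strategy is to reduce to the $p$-adic case by specialization: for each continuous $\co_L$-algebra homomorphism $\lambda:\scrO\to \co_K$ furnished by \cref{lem:denseinSpecO}, the composite $\lambda\circ\rho_i:G_F\to\GLn(\co_K)$ is continuous, and the continuous function $g\mapsto \lambda\bigl(\chi_{\rho_1}^{[m]}(g)-\chi_{\rho_2}^{[m]}(g)\bigr)$ vanishes on all Frobenii at primes in $T$. Since $T$ has upper density one, an elementary density argument combined with the Chebotarev density theorem shows that these Frobenii form a dense subset of $G_F$: indeed, if $T\cap\{v:\frob_v\in hN\}$ were finite for some open coset $hN$ with $N$ an open normal subgroup of $G_F$, then $T$ itself would have upper density at most $1-[G_F:N]^{-1}<1$, a contradiction. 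Hausdorffness of $\co_K$ then forces $\lambda\circ(\chi_{\rho_1}^{[m]}-\chi_{\rho_2}^{[m]})$ to vanish identically on $G_F$. Varying $\lambda$ and invoking \cref{lem:denseinSpecO} together with the fact that $\scrO$ is a domain yields $\chi_{\rho_1}^{[m]}(g)=\chi_{\rho_2}^{[m]}(g)$ for every $g\in G_F$, which is assertion (d). The hardest point is ensuring that the Chebotarev argument carries through uniformly across all admissible specializations $\lambda$ and all open cosets of $G_F$, rather than only inside a fixed finite quotient.
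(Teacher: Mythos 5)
Your proposal is correct, and on three of the four implications it coincides with the paper: the paper also passes from (b) to (c) by discarding the density-zero set of ramified places via \cite[Theorem 1]{saha-den} and applying \cref{lem:weaklemma1} (your choice of $k_v$ with $\frob_v^{k_v}\in H_v$ merely makes explicit what the paper leaves implicit), and it also gets (d)$\Rightarrow$(a) from \cite[Theorem 1]{das2021finiteness}, which is what \cref{thm:Genpot=elepot=mtrace} encodes. The genuine divergence is in (c)$\Rightarrow$(d). The paper specializes: it chooses $h\neq 0$ by \cite[Proposition 4]{saha-den} so that $\lambda\circ\rho_1,\lambda\circ\rho_2$ are semisimple whenever $\lambda(h)\neq 0$, applies Das--Rajan's \cite[Theorem 4]{das2021finiteness} to these $p$-adic specializations to obtain $m$-trace equivalence there, and then uses \cref{lem:denseinSpecO} to deduce $(\Tr\rho_1(g^m)-\Tr\rho_2(g^m))\,h=0$ in the domain $\scrO$. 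You instead argue directly: since $T$ has upper density one, for every open normal subgroup $N$ of $G_F$ and every coset $hN$ there is some $v\in T$ with a Frobenius lift at a prime above $v$ lying in $hN$ (your density count is the right one), so these lifts are dense in $G_F$, and the continuous, conjugation-invariant function $g\mapsto\Tr\rho_1(g^m)-\Tr\rho_2(g^m)$ vanishes on them, hence identically. This is valid and in fact more economical: it needs neither semisimplicity of the specializations (so no $h$ and no \cite[Proposition 4]{saha-den}) nor \cite[Theorem 4]{das2021finiteness} at that step; moreover your detour through $\lambda$ is superfluous, since the $\frakm$-adic (resp.\ Banach) topology on $\scrO$ is Hausdorff, so the continuity-plus-density argument can be run in $\scrO$ itself, which also sidesteps having to know that the kernels of the \emph{continuous} homomorphisms in \cref{lem:denseinSpecO} are already dense (a point your version, like the paper's, quietly relies on if one keeps the specialization step). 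What the paper's route buys in exchange is uniformity of method — everything is reduced to the $p$-adic results of Das--Rajan exactly as in the rest of the article — at the cost of the auxiliary semisimplicity input; what yours buys is a shorter, self-contained Chebotarev argument, provided the Frobenius-lift bookkeeping is written out as you sketch it.
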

\begin{proof}
   The implication from (a) to (b) is straightforward, and the implication from  (d) to (a) follows from \cite[Theorem 1]{das2021finiteness}.
    
    Assume that (b) holds. By \cite[Theorem 1]{saha-den}, the set $T_{\mathrm{Ram}}$ of places of $F$ where at least one of the representations $\rho_1, \rho_2$ ramifies has density zero. Hence, the set $T\setminus T_{\mathrm{Ram}}$ has upper density one. Therefore, from \cref{lem:weaklemma1},  $$\Tr(\rho_1(\frob_v^m))=\Tr(\rho_2(\frob_v^m)),$$ where $m$ is a positive integer depending only on $\scrO$ and $n$. This proves that (c) is true.  
    
     Assume that (c) holds. By \cite[Proposition 4]{saha-den}, there is an element $h\neq 0$ in $\scrO$ such that $\lambda \circ \rho_1, \lambda\circ \rho_2$ are semisimple for any $\lambda: \scrO \to \Qp$ with $\lambda(h) \neq 0$. 
     By \cref{lem:denseinSpecO}, there exists a finite extension $K/L$ such that the kernels of all the $\co_L$-algebra homomorphisms from $\scrO$ to $K$ form a dense subset of $\mathrm{Spec} (\scrO)$. Let $\lambda: \scrO \to K$ be a continuous $\calO_L$-algebra homomorphism. Suppose $\lambda(h) \neq 0$. Note that $\lambda\circ \rho_1, \lambda \circ \rho_2$ are unramified at $T$. By \cite[Theorem 4]{das2021finiteness}, the representations $\lambda\circ \rho_1, \lambda \circ \rho_2$ are $m$-trace equivalent. Then for any $g\in G_F$, $\lambda (\Tr \rho_1(g^m) - \Tr \rho_2(g^m))=0$, and hence $(\Tr \rho_1(g^m) - \Tr \rho_2(g^m))h$ vanishes under any continuous $\calO_L$-algebra homomorphism $\lambda: \scrO \to K$. Hence,  \cref{lem:denseinSpecO}, it follows that $(\Tr \rho_1(g^m) - \Tr \rho_2(g^m))h=0$. This proves the implication from (c) to (d).
\end{proof}

Note that the above result admits an immediate extension to a finite number of representations. We establish the following result that considers a converging sequences of  residually absolutely irreducible Galois representations with coefficients in $\co$, a domain finite over a power series ring of a $p$-adic integer ring.

\begin{theorem}
 Let $F$ be a number field and let $\rho_i:G_F\to \GLn(\calO)$ be sequences of residually absolutely irreducible Galois representations uniformly trace converging to a representation $\rho:G_F\to \GLn(\calO)$. 
Then, the following are equivalent:
    
    \begin{enumerate}[(a)]
        \item For any $i,j\geq 1$, the representations $\rho_i$ and $\rho_j$ are potentially equivalent.
        \item There exists a set $T$ of places of $F$ with upper density one such that, for all $i,j\geq 1$, the representations $\rho_i$ and $\rho_j$ are locally potentially equivalent at $T$. 
        \item There exists a positive integer $m$, depending only on $\calO$ and $n$, and a set $T$ of places of $F$ with upper density one, where $\rho_i$'s are unramified, such that 
        $$\Tr(\rho_i(\frob_v^m))=\Tr(\rho_j(\frob_v^m)) \hspace{5mm}\textnormal{for all }v\in T\textnormal{ and }i,j\geq 1.$$
        \item For all $i,j\geq 1$, the representations $\rho_i,\rho_j$ are $m$-trace equivalent for some $m$.
    \end{enumerate}
\end{theorem}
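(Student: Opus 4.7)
The plan is to reduce this sequence-valued statement to the pairwise result \cref{Thm:PotEq=LocPotEq=FromM=MTrace}, using \cref{thm:seq-ram} in place of \cite[Theorem~1]{saha-den} to control the ramification loci simultaneously across the whole sequence. The key uniformity observation is that the integer $m$ supplied by \cref{Thm:PotEq=LocPotEq=FromM=MTrace} and by \cref{lem:weaklemma1} depends only on $\calO$ and $n$, so a single $m$ will serve every pair $(\rho_i,\rho_j)$. Furthermore, since each $\rho_i$ is residually absolutely irreducible, a standard Burnside/Nakayama argument shows that $\rho_i \otimes \overline{\calK}$ is absolutely irreducible and in particular semisimple, so every pair satisfies the hypotheses of \cref{Thm:PotEq=LocPotEq=FromM=MTrace}.

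The implication (a) $\Rightarrow$ (b) is immediate by taking $T$ to be the set of all finite places of $F$. For (b) $\Rightarrow$ (c), I would first invoke \cref{thm:seq-ram} to conclude that the set $T_{\mathrm{Ram}}$ of places where some $\rho_i$ ramifies has density zero, and then replace $T$ by $T' := T \setminus T_{\mathrm{Ram}}$, which still has upper density one and consists only of primes unramified for every $\rho_i$. For $v \in T'$ and any pair $(i,j)$, the local potential equivalence of $\rho_i \circ i_v$ and $\rho_j \circ i_v$ together with \cref{thm:Genpot=elepot=mtrace} applied over $F_v$ delivers elementwise potential equivalence; evaluating at $\frob_v$ yields a (possibly $v$-dependent) common power $k$ for which $\rho_i(\frob_v)^k$ and $\rho_j(\frob_v)^k$ are conjugate over $\calK$. \cref{lem:weaklemma1} then promotes this to the equality $\Tr(\rho_i(\frob_v)^m) = \Tr(\rho_j(\frob_v)^m)$ with $m$ depending only on $\calO$ and $n$, uniformly in $v$ and in the pair.

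For (c) $\Rightarrow$ (d), I would apply \cref{Thm:PotEq=LocPotEq=FromM=MTrace} to each pair $(\rho_i,\rho_j)$ separately: condition (c) of the present theorem is exactly condition (c) of that theorem for the pair, so the conclusion gives pairwise $m$-trace equivalence, with a common uniform $m$ across all pairs. The implication (d) $\Rightarrow$ (a) likewise follows pairwise from \cref{Thm:PotEq=LocPotEq=FromM=MTrace}.

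The only non-routine step is (b) $\Rightarrow$ (c), and the sole new ingredient relative to the pairwise statement is the use of \cref{thm:seq-ram} to handle the ramification loci of the entire sequence in one stroke; this is precisely where the uniform trace convergence hypothesis enters. I do not anticipate any further obstacle, provided one tracks that every appearance of $m$ is the uniform one produced by \cref{lem:weaklemma1} (and hence by \cref{Thm:PotEq=LocPotEq=FromM=MTrace}), so that no dependence on the individual pair $(i,j)$ sneaks in.
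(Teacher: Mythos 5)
Your proposal is correct and follows essentially the same route as the paper: (a)$\Rightarrow$(b) is immediate, (b)$\Rightarrow$(c) uses \cref{thm:seq-ram} to extract a density-one set of places unramified for all $\rho_i$ and then \cref{lem:weaklemma1} to get the uniform exponent $m$ from conjugacy of a power of the Frobenius matrices, and the remaining implications are delegated pairwise to \cref{Thm:PotEq=LocPotEq=FromM=MTrace}. The extra details you supply (the Burnside-type argument for semisimplicity and the passage through elementwise potential equivalence at $\frob_v$) are just explicit versions of steps the paper leaves implicit.
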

\begin{proof}
The implication from (a) to (b) is immediate.
When (b) holds, from 
\cref{thm:seq-ram}, it follows that there is a set $T$ of places of $F$ with upper density one where $\rho_i$'s are unramified, and then we obtain a positive integer $m$ having the required properties by applying \cref{lem:weaklemma1}. This shows that (b) implies (c). 
\cref{Thm:PotEq=LocPotEq=FromM=MTrace} yields the remaining implications. 
\end{proof}

 Let $(A,\fraka)$ be a local domain. Note that the notion of $m$-trace convergence can be defined for sequences of representations of a group with coefficients in a local domain. 
 \begin{definition}
 Let $(A,\fraka)$ be a local domain. 
     A sequence of representations $\varrho_i:\Gamma\to \GLn(A)$ is said to be  \emph{uniformly $m$-trace convergent} 
     if  $\Tr(\varrho_i(g)^m)$ is convergent, uniformly over $\Gamma$, in the $\fraka$-adic topology on $A$, as $i\to \infty$.
 \end{definition}
 
 From \cite[Theorem 1]{das2021finiteness}, a sequence $\rho_i:\Gamma\to \GLn(A)$ where, for all $i,j\geq 1$, the representations  $\rho_i,\rho_j$ are potentially equivalent, is $m$-trace convergent. It would be interesting to see if \cite[Proposition 1]{Khare-limit} can be extended to uniformly $m$-trace convergent sequences of residually absolutely irreducible Galois representations $G_F\to \GLn(A)$, where $A$ is either the valuation ring of a $p$-adic number field, or a domain finite over a power series ring over a $p$-adic integer ring.

\section{Strong multiplicity one theorems}
\label{Sec:StrnogMult1}

The multiplicity one theorems are fundamental in the study of automorphic representations and Galois representations. 
Let $F$ be a number field and $\mathbb A_F$ denote the ring of adeles of $F$. 
For automorphic representations of $\mathrm{GL}_n(\mathbb A_F)$, the strong multiplicity one theorem due to
Jacquet, Piatetskii--Shapiro and Shalika \cite{JacquetShalikaI, JacquetShalikaII, JacquetPiatetskiiShapiroShalika} states that two irreducible, unitary, cuspidal automorphic representations are isomorphic if their local factors are isomorphic at almost all finite places. 
Ramakrishnan conjectured a refinement of this theorem for automorphic representations  \cite[p. 442]{RamakrishnanPureMotivesAutomorphicForms}, which he proved for $n=2$ \cite{RamakrishnanRefinementStrongMult}. 
The analogue of the conjecture due to Ramakrishnan for $\ell$-adic Galois representations was established by Rajan \cite[Theorem 1]{RajanStrongMultOne}. 
We show that the arguments of the previous sections also yield a strong multiplicity one result for big Galois representations. This is obtained as a consequence of \cite[Theorem 1]{RajanStrongMultOne}.

\begin{theorem}
\label{Thm:StrongMult}
Let $\scrO$ be a domain finite over $\co_L[[X_1,\ldots,X_s]]$, or be an affinoid algebra over $L$. 
Let $\varrho_1,\varrho_2:G_F\to \GLn(\scrO)$ be two continuous representations such that they are semisimple after extending scalars to $\overline \scrK$. 
Suppose they are unramified outside a finite set $S$ of finite places of $F$, and the upper density of 
$$SM(\varrho_1,\varrho_2)=\{v\in \Sigma_F \setminus S\ |\ \Tr(\rho_1(\frob_v))=\Tr(\rho_2(\frob_v))\}$$ 
is $>1-1/2n^2$. Then $\rho_1, \rho_2$ are isomorphic after extending scalars to the fraction field of $\scrO$.
\end{theorem}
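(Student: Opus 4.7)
The plan is to reduce the theorem to Rajan's strong multiplicity one result \cite[Theorem 1]{RajanStrongMultOne} by specializing $\varrho_1,\varrho_2$ along sufficiently many $\co_L$-algebra homomorphisms $\scrO\to K$, in the spirit of the arguments already employed in \cref{Sec:PotenEq}. The density machinery developed in that section is the main engine; once the specialization is set up, Rajan's theorem supplies the conclusion at the level of a $p$-adic coefficient ring, and we only need to lift the consequences back to $\scrO$.

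First, using \cite[Proposition 4]{saha-den}, I would fix a nonzero element $h\in\scrO$ such that for every continuous $\co_L$-algebra homomorphism $\lambda:\scrO\to\Qp$ with $\lambda(h)\neq 0$, both $\lambda\circ\varrho_1$ and $\lambda\circ\varrho_2$ are semisimple. By \cref{lem:denseinSpecO}, there exists a finite extension $K/L$ such that the kernels of the $\co_L$-algebra homomorphisms $\scrO\to K$ form a dense subset of $\mathrm{Spec}(\scrO)$, and every such $\lambda$ has image inside $\co_K$. For any choice of $\lambda$ with $\lambda(h)\neq 0$, the specializations $\lambda\circ\varrho_1,\lambda\circ\varrho_2:G_F\to\GLn(K)$ are continuous, semisimple, unramified outside $S$, and satisfy
\[
\Tr((\lambda\circ\varrho_1)(\frob_v))=\Tr((\lambda\circ\varrho_2)(\frob_v))
\]
for every $v\in SM(\varrho_1,\varrho_2)$, a set whose upper density still exceeds $1-1/2n^2$. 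Hence \cite[Theorem 1]{RajanStrongMultOne} applies and yields $\lambda\circ\varrho_1\cong\lambda\circ\varrho_2$ over $K$; in particular, $\lambda(\Tr\varrho_1(g))=\lambda(\Tr\varrho_2(g))$ for every $g\in G_F$.

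Next, for each $g\in G_F$, the element $(\Tr\varrho_1(g)-\Tr\varrho_2(g))\cdot h\in\scrO$ lies in the kernel of every $\co_L$-algebra homomorphism $\scrO\to K$, so by the density statement in \cref{lem:denseinSpecO} (and the remark following it) this element vanishes. Since $\scrO$ is a domain and $h\neq 0$, we obtain $\Tr\varrho_1(g)=\Tr\varrho_2(g)$ for all $g\in G_F$. Because $\varrho_1\otimes_\scrO\overline{\scrK}$ and $\varrho_2\otimes_\scrO\overline{\scrK}$ are semisimple with identical characters, they are isomorphic over $\overline{\scrK}$, and the standard descent for semisimple representations in characteristic zero then gives the desired isomorphism already over $\scrK$.

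The hard part is not really any single step but rather the compatibility of the two density inputs: \cite[Proposition 4]{saha-den} cuts out a hypersurface outside which specializations remain semisimple, while \cref{lem:denseinSpecO} ensures that specializations avoiding this hypersurface are still numerous enough to detect nonzero elements of $\scrO$. Once these are aligned, the substantive content is carried entirely by Rajan's strong multiplicity one theorem, and no additional Galois-theoretic input is expected to be necessary.
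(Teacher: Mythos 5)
Your proposal is correct and follows essentially the same route as the paper: specialize via \cite[Proposition 4]{saha-den} and \cref{lem:denseinSpecO}, apply \cite[Theorem 1]{RajanStrongMultOne} to the specializations, deduce equality of traces over $\scrO$ (your multiplication-by-$h$ trick is just a slightly more explicit version of the paper's density argument), and conclude by Brauer--Nesbitt/semisimplicity over the fraction field.
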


\begin{proof}
By \cite[Proposition 4]{saha-den}, there exists a nonzero element $h\in \scrO$ such that $\lambda \circ \varrho_1, \lambda \circ \varrho_2$ are semisimple for any $\calO_L$-algebra homomorphism $\lambda: \scrO \to \Qp$ with $\lambda (h) \neq 0$. 
Since the upper density of $SM(\varrho_1,\varrho_2)$ is $> 1 - \frac 1{2n^2}$, for such a homomorphism $\lambda$, we obtain from \cite[Theorem 1]{RajanStrongMultOne} that $\lambda \circ \varrho_1, \lambda \circ \varrho_2$ are isomorphic, and hence $\lambda \circ \Tr \varrho_1, \lambda \circ \Tr \varrho_2$ are equal. 
By \cref{lem:denseinSpecO}, the kernels of the 
$\calO_L$-algebra homomorphisms $\lambda: \scrO \to \Qp$ with $\lambda (h) \neq 0$ form a dense subset of $\mathrm{Spec}(\scrO)$. It follows that $\varrho_1, \varrho_2$ have the same traces. By the Brauer--Nesbitt theorem, the result follows. 
\end{proof}
 
\section*{Acknowledgement}
 
The second author would like to acknowledge the INSPIRE Faculty Award (IFA18-MA123) from the Department of Science and Technology, Government of India. 

\providecommand{\bysame}{\leavevmode\hbox to3em{\hrulefill}\thinspace}
\providecommand{\MR}{\relax\ifhmode\unskip\space\fi MR }
\providecommand{\MRhref}[2]{%
  \href{http://www.ams.org/mathscinet-getitem?mr=#1}{#2}
}
\providecommand{\href}[2]{#2}

 \end{document}